\RequirePackage{fix-cm}
\documentclass[smallextended]{svjour3}       
\usepackage{graphicx} 


\usepackage{booktabs} 
\usepackage{array} 
\usepackage{paralist} 
\usepackage{verbatim} 
\usepackage{algorithmic}
\usepackage{algorithm}
\usepackage{amsmath}
\usepackage{amssymb}
\usepackage{caption}
\usepackage{subcaption}
\usepackage{mathtools,amscd}



\usepackage[nottoc,notlof,notlot]{tocbibind} 
\newtheorem{assumption}{Assumption}
\newcommand{\real}{\mathbb{R}}
\newcommand{\bx}{{x}}

\newcommand{\bas}{\bar{\sigma}}
\newcommand{\bax}{\bar{x}}
\newcommand{\bal}{\bar{\lambda}}
\newcommand{\bam}{\bar{\mu}}
\newcommand{\bat}{\bar{\tau}}
\newcommand{\VV}{V}
\newcommand{\bF}{{\bf F}}
\newcommand{\GG}{{G}}
\newcommand{\sta}{\mbox{sta}}
\newcommand{\bsig}{\mbox{\boldmath$\sigma$}}

\begin{document}

\title{Canonical duality for solving general nonconvex constrained problems}
\author{Vittorio Latorre, David Yang Gao}

\institute{V. Latorre \at
              Department of Computer, Control and Management Engineering, University of Rome ÒSapienzaÓ, via Ariosto 25, Rome, Italy \\
              \email{latorre@dis.uniroma1.it}           
           \and
           D. Y. Gao \at
              School of Science, Information Technology and Engineering, University of Ballarat, Mt Helen, Ballarat, Victoria 3350\\
                            \email{d.gao@ballarat.edu.au}           
}

\date{Received: date / Accepted: date}

\maketitle

\abstract
This paper presents a canonical duality theory for solving a general nonconvex constrained optimization problem within a unified framework
to cover Lagrange multiplier method and KKT theory.
It is proved that if both target function and constraints possess certain patterns necessary for modeling real systems,
 a perfect dual problem (without duality gap)
 can be obtained in a unified form with  global optimality conditions provided.
 While the popular  augmented Lagrangian method may   produce more difficult  nonconvex problems  due to the nonlinearity of  constraints.

\section{Introduction}
We are interested in solving the following nonconvex constrained minimization problem:
\begin{equation}\label{eq: original problem}
\begin{array}{lll}
\min & f(x)\\
s.t. & g_i(x)\le0 & i=1,\dots, m\\
&       h_j(x)=0 & j=1,\dots, p,
\end{array}
\end{equation}
where $f$, $g_i$ and $h_j$ are smooth, real-valued functions  on a subset of $\real^n$ for all $i=1,\dots, m$ and
$j=1,\dots, p$. For notational convenience,
we use vector form for  constraints $g(x)$ and $h(x)$ (without the subscript):
$$
\begin{array}{l}
g(x)=\left(g_1(x),\dots, g_m(x)\right) , \\
h(x)=\left(h_1(x),\dots, h_p(x)\right).
\end{array}
$$
Therefore, the feasible space can be defined as
$$
{\cal X}_a:=\{x\in \real^n| g(x)\le0,h(x)=0 \}.
$$

 Lagrange  multiplier method  was originally proposed  by J-L. Lagrange from analytical mechanics
 in 1811 \cite{lag}.
 During the past two hundred  years, this method and the associated Lagrangian duality theory have been well-developed
 with extensively applications  to many fields of physics,  mathematics and engineering sciences.
 Strictly speaking, the Lagrange multiplier method can be used only for equilibrium constraints.
For inequality constraints, the additional  KKT conditions should be considered.
In order to solve inequality constrained problems, penalty methods and augmented Lagrangian methods have been
studied extensively during the past fifty years (see \cite{powell,bgm2005,aug2000}).
However, these well-developed methods can be used mainly for solving linear inequality constrained problems.
For nonlinear constraints,  say even  the most simple quadratic constraint $\|x\|^2 \le r$ which is essential for virtually any
real-world system \cite{chen-gao-OMS},
the (external) penalty/augmented Lagrangian methods produce
 a  nonconvex   term $\frac{1}{2}  \alpha (\|x\|^2 - r)^2_+$
in the problem.

Canonical duality theory is potentially powerful methodological method, which
was developed originally from nonconvex analysis/mechanics   \cite{GaoBook 2000,gao-jogo00}.
This theory has been used successfully for solving a large class of challenging problems in  nonconvex/nonsmooth/discrete systems
 \cite{gao-cace09,wang-etal,zgy},
 recently in network communications \cite{g-r-p,ruan-gao-ep} and radial basis neural networks \cite{LaG 13}.
 It was shown in \cite{gao-sherali-amma}
 that both the Lagrange multiplier method and KKT conditions can be unified within a framework of the canonical duality theory.
  This unified framework leads to an elegant and simple way to handle
 nonlinear constrained optimization problems. The associated triality theory
  provides  global optimal conditions which can be used to develop efficient algorithms for solving
  general nonconvex constrained problems (see \cite{gao-ruan-sherali-jogo,gao-yang}).

The canonical duality theory for solving nonconvex constrained quadratic minimization problem
has been discussed in \cite{gao-ruan-sherali-jogo}.
The main goal of this paper is to demonstrate how to use the canonical duality theory for solving the general
  non-convex constrained problem (1).

\section{Unity for  Convex Problems}
For a given convex feasible set $\cal E$, its indicator function $\Psi(\epsilon)$ is defined by
\begin{equation}
\Psi(\epsilon)=
\begin{cases}
0 \qquad \mbox{if } \epsilon \in {\cal E}\\
+\infty \quad \mbox{otherwise}.
\end{cases}
\end{equation}
The Legendre conjugate of $\Psi(\epsilon)$ is defined by using the Fenchel transformation
\begin{equation}
\Psi^*(\epsilon^*)=\sup_{\epsilon \in \cal E} \{\epsilon^T\epsilon^*-\Psi(\epsilon) \} \quad \forall \epsilon^*\in {\cal E^*} ,
\end{equation}
where  ${\cal E^*}$ is a dual space of $\cal E$.
Clearly, $\Psi^*(\epsilon^*)$ is convex and lower semi-continuous.
 By the theory of convex analysis,
  the following canonical duality relations hold on  $ {\cal E}\times {\cal E^*} $:
\begin{equation}\label{eq: relations}
\epsilon^* \in \partial \Psi(\epsilon) \quad \Leftrightarrow \quad \epsilon \in \partial \Psi^*(\epsilon^*) \quad \Leftrightarrow  \quad \Psi(\epsilon)+ \Psi^*(\epsilon^*)= \epsilon^T\epsilon^*.
\end{equation}
A real-valued function is called the  {\em canonical function} if the canonical duality relations (\ref{eq: relations}) hold.
Based on the standard  canonical dual transformation, we
choose the geometrical operator $\xi_0 = \Lambda_0 (x)= \{ g(x), h(x) \} : \real^n \rightarrow \real^2$ and
let
\[
V_0(\xi_0) = \Psi_1(g) + \Psi_2(h),
\]
where
\begin{equation}
\Psi_1(g)=
\begin{cases}
0 \qquad \mbox{if } g\le0\\
+\infty \quad \mbox{otherwise},
\end{cases}  \;\;\;\;
\Psi_2(h)=
\begin{cases}
0 \qquad \mbox{if } h=0\\
+\infty \quad \mbox{otherwise} ,
\end{cases}
\end{equation}
the constrained problem (\ref{eq: original problem})
can be written in the following canonical form
\begin{equation}\label{eq: indicator f}
\min  \left\{ P(x) = f(x)+ V_0(\Lambda_0(x)) | \;\; \forall x \in \mathbb{R}\right\}.
\end{equation}

By the Fenchel transformation, the conjugate of $V_0(\xi_0)$ can be easily obtained as
$V_0^* (\xi^*_0) = \Psi^*_1(\lambda) + \Psi_2^*(\mu)$, where $\xi_0^* = (\lambda , \mu)$ and
\[
\Psi_1^*(\lambda)=\sup_{g \in \real^m} \{g^T\lambda-\Psi_1(g) \}=
\left\{
\begin{array}{ll}
0 & \mbox{if }  \lambda\ge 0 \\
+ \infty  & \mbox{otherwise} ,
\end{array}
\right.
\]
\[
\Psi_2^*(\mu)=\sup_{h \in \real^p}\{h^T\mu-\Psi_2(h) \} = 0 \quad \forall \quad \mu\in\real^p.
\]
  By using the Fenchel-Young equality $V_0(\xi_0) = \xi_0^T\xi_0^* -  V_0^*(\xi_0^*)$
  to replace $V_0(\Lambda_0(x))$  in  (\ref{eq: indicator f}),
   the
  so called {\em  total complementarity function }  in the canonical duality theory can be obtained in the following form
\begin{equation}\label{eq: complete complementarity}
\Xi_0(x,\lambda, \mu)=f(x)+[\lambda^T g(x)-\Psi_1^*(\lambda)]+[ \mu^T h(x)-\Psi_2^*(\mu)].
\end{equation}

For the indicator $\Psi_1(g)$, the canonical duality relations in  (\ref{eq: relations})
lead to
\begin{equation}\label{eq: conditions on g}
\begin{array}{llll}
\lambda_i \in \partial \Psi_1(g_i) &\Longrightarrow& \lambda\ge 0 & i=1,\dots,m\\
g(x)\in\partial \Psi^*_1(\lambda) &\Longrightarrow & g_i\le0 & i=1,\dots,m\\
\lambda^Tg(x)=\Psi_1(g(x))+\Psi^*_1(\lambda)&\Longrightarrow & \lambda^Tg=0 ,
\end{array}
\end{equation}
which are the KKT  conditions for the inequality constrains $g(x) \le 0$.
While for $\Psi_2(h)$, the same relations in  (\ref{eq: relations})   lead to
\begin{equation}\label{eq: conditions on h}
\begin{array}{llll}
\mu \in \partial \Psi_2(h_j) &\Longrightarrow& \mu\in \real^p\\
h_j(x)\in\partial \Psi^*_2(\lambda) &\Longrightarrow & h_j=0 & j=1,\dots,p\\
\mu^Th(x)=\Psi_2(g(x))+\Psi^*_2(\mu)&\Longrightarrow & \mu^Th=0 .
\end{array}
\end{equation}
From the second and third equation in the (\ref{eq: conditions on h}), it is clear that in order to enforce   the  constrain $h(x)=0$,
 the dual variables $\mu_i$ must be not zero for $i=1,\dots,p$.
 This is a special  complementarity condition for equality constrains,
  generally not mentioned  in many textbooks.
However, the implicit constraint  $\mu \neq 0$
is important in nonconvex optimization. Let $\bsig_0 = (\lambda, \mu) $. The
dual feasible spaces  should be defined as
\[
{\cal S}_0 = \{ \bsig_0 = ( \lambda, \mu)  \in \real^{m \times p }|\;\; \lambda_i\ge 0 \;\; \forall i=1,\dots,m, \;\;
 \mu_j\neq 0  \;\;\forall j=1,\dots,p\}.
\]
Thus, on the feasible space $\real^n\times{\cal S}_0$,
 the total complementary function (\ref{eq: complete complementarity}) can be simplified as
\begin{equation}\label{eq: lagrangian}
\Xi_0(x,\bsig_0)=f(x)+\lambda^T g(x) +\mu^T h(x)={\cal L}(x,\lambda, \mu) ,
\end{equation}
which is   the classical Lagrangian form, and we have
\[
P(x) = \sup \left\{ \Xi_0(x, \bsig_0) | \; \forall \bsig_0  \in {\cal S}_0 \right\} .
\]
This shows that the canonical duality theory is an extension of the Lagrangian theory
(actually, the total complementary function was called the extended Lagrangian in \cite{GaoBook 2000}).
 With  the canonical duality theory it is possible to formulate the optimality conditions for both
  inequality and equality constraints
  in an unified framework.

If  $f$,  $g$ are convex and   $h$ is linear, the Lagrangian (\ref{eq: lagrangian}) is a saddle function, i.e.
${\cal L}(x,\lambda,\mu)$  is convex in the primal variable $x$ and concave(linear) in the dual variables $\lambda$ and $\mu$.
In this case, the Lagrangian dual can be defined by
$$
P^*(\lambda, \mu)= \inf_{x\in{\cal X}_a} {\cal L}(x, \lambda, \mu)
$$
on a subspace ${\cal S}_a \subset {\cal S}_0$ and
the saddle Lagrangian duality leads to   the following strong duality relation
$$
\inf_{x\in{\cal X}_a} {\cal L}(x, \lambda, \mu)=\sup_{(\lambda, \mu)  \in{\cal S}_a} P^*(\lambda, \mu).
$$
It is well-known that this Lagrangian duality holds only for convex problems.
For general nonconvex constrained problems,
only the weak duality relation is available, i.e. there is a duality gap between the primal problem and its
Lagrangian dual.
 With the canonical duality theory, it is possible to  close the duality gap to obtain global optimal solutions.

\section{ Sequential Transformation for Nonconvex Problems}
In order to solve nonconvex constrained problems in a unified way,
 the nonconvex functions should be assumed to have certain patterns in order to model real-world problems.
In this paper, we need the following assumption.
\begin{assumption}\label{bounded below}
The nonconvex functions
$f$, $g_i$ and $h_j$ for $i=1,\dots, m$ and $j=1,\dots,p$  can be expressed in the following way:
\begin{equation}
\begin{array}{l}
f(x)=  V_f(\Lambda_f (x))+\frac{1}{2} x^T Ax-c^Tx\nonumber\\
g_i(x)=  V_{g_i}(\Lambda_{g_i} (x))\quad i=1,\dots, m\\
h_j(x)= V_{h_j}(\Lambda_{h_j} (x)) \quad j=1,\dots, p
\end{array}
\end{equation}
where $\xi_f=\Lambda_f (x)$, $\xi_{g_i}=\Lambda_{g_i} (x)$ and $\xi_{h_j}=\Lambda_{h_j} (x)$ are
quadratic geometrical operators such that $V_f(\xi_f)$,
$V_{g_i}(\xi_{g_i})$, $V_{h_j}(\xi_{h_j})$ are differentiable canonical functions for every
 $i=1,\dots, m$ and $j=1,\dots, p$.
\end{assumption}

Based on this assumption, we can define the following second-level  geometrical operators
\[
\xi_g= \Lambda_g (x) = \left(\Lambda_{g_i},\dots, \Lambda_{g_m} \right) , \;\;  \quad \xi_h= \Lambda_g (x)
 = \left(\Lambda_{h_j},\dots, \Lambda_{h_p} \right).
\]
Let $\xi_1 = (\xi_f , \xi_g, \xi_h)= \Lambda_1(x)$, $\VV_g (\xi_g)  = \{ \VV_{g_i}(\xi_{g_i}) \}$,
and $\VV_h (\xi_h)  = \{ \VV_{h_i}(\xi_{h_i}) \}$. By Assumption 1, the following duality relations are
invertible on their domains, respectively,
\begin{equation}\label{eq: dual map}
 \xi^*_f  =  \nabla V_f(\xi_f) , \;\;  \xi^*_g =  \nabla V_g(\xi_g) ,  \;\;
 \xi^*_h = \nabla V_h(\xi_h).
\end{equation}
Also, the Legendre conjugates $\VV^*_f (\xi_f^*), \;\; \VV^*_g(\xi^*_g)$ and $\VV^*_h (\xi_h^*)$ can be defined uniquely.

 Denote $\bsig_1 = (\sigma_f, \sigma_g, \sigma_h) = ( \xi^*_f, \xi^*_g, \xi^*_h) $ and let ${\cal S}_1$ be a domain such that on which,
 the inverse
 duality relations (\ref{eq: dual map}) hold.
 By using    the Fenchel-Young equalities, the first-level total complementary function $\Xi_0$  (\ref{eq: lagrangian}) can be
written in the following second-level form:
\begin{eqnarray} \label{eq: tot comp1}
  \Xi_1(x,\bsig_0,\bsig_1 ) &=& \Lambda_f(x) \sigma_f - V^*_f(\sigma_f)+\lambda^T( \Lambda_g(x)\circ \sigma_g - V^*_g(\sigma_g))  \nonumber \\
& & + \mu^T( \Lambda_h(x) \circ \sigma_h - V^*_h(\sigma_h))-U(x),
\end{eqnarray}
where   $U(x)=c^Tx- \frac{1}{2}x^TAx$,
 and the symbol $\circ$ indicates the Hadamard product  between the primal and dual variables, i.e.,
$$
 \xi_g \circ \sigma_g = \left(\xi_{g_1} \sigma_{g_1}, \dots, \xi_{g_m} \sigma_{g_m} \right) .
$$
Based on (\ref{eq: tot comp1}), the canonical dual function can be obtained by
\begin{equation}\label{eq: dual}
P^d(\bsig_0,\bsig_1)= U^\Lambda(\lambda, \mu, \sigma)-\left(V_f^*(\sigma_f)+\lambda^TV_g^*(\sigma_g)+\mu^TV_h^*(\sigma_h)\right) ,
\end{equation}
where $U^\Lambda(\bsig_0,\bsig_1)$ is the  $\Lambda$-conjugate of  $U(x)$ defined by (see \cite{GaoBook 2000})
\begin{equation}\label{eq: u-stat}
U^\Lambda(\bsig_0,\bsig_1)=\sta\{ \Lambda_f(x) \sigma_f +\lambda^T\left( \Lambda_g(x)\circ \sigma_g \right)+\mu^T
\left( \Lambda_h(x) \circ \sigma_h \right)-U(x): x\in \real^n
 \}
\end{equation}
 Let ${\cal S}_a \subset {\cal S}_0 \times {\cal S}_1$ be
  the canonical dual feasible space such that on which,  $U^\Lambda(\bsig_0,\bsig_1)$ is well-defined.
The canonical dual problem can be proposed as
$$
({\cal P}^d): \quad \sta\{P^d(\bsig_0,\bsig_1): \;\;  (\bsig_0,\bsig_1)\in {\cal S}_a  \}.
$$
\begin{figure}[h]
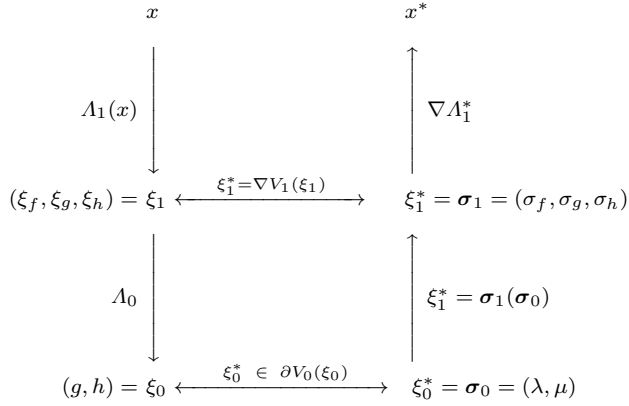

{
$$
\begin{array}{rlclc}
&x& &x^*& \\[0.2cm]
\Lambda_1(x) &\left\downarrow\rule{0cm}{1.cm}\right.\phantom{(\varphi^n)^*}& &
\left\uparrow\rule{0cm}{1.cm}\right.
\nabla\Lambda^*_1\\[0.2cm]
(\xi_f, \xi_g, \xi_h) = & \xi_1 \xleftrightarrow{\phantom{xxx}{\xi^*_1 = \nabla V_1(\xi_1)}\phantom{xxx}}&&
\xi^*_1 =\bsig_1 = (\sigma_f, \sigma_g, \sigma_h) &\\[0.2cm]
\Lambda_0  &\left\downarrow\rule{0cm}{1.cm}\right.\phantom{(\varphi^n)^*}& &
\left\uparrow\rule{0cm}{1.cm}\right.  \xi_1^* = \bsig_1 (\bsig_0)  \\[0.2cm]
(g, h) = & \xi_0 \xleftrightarrow{\phantom{x}{\;\;\;\;\; \xi^*_0 \;\;\in \;\;\partial V_0 (\xi_0)\;\;\;\;}\phantom{x}}
&&\ \xi^*_0 = \bsig_0 =(\lambda, \mu)  &\\[0.2cm]
\end{array}
$$
}
\caption{The scheme of the  sequential canonical dual transformation.}
   \label{fig: sequential}
\end{figure}

\begin{theorem}\label{com dual prin}
(Complementarity Dual Principle) Suppose that the point $(\bar{x},\bar{\bsig}_0,\bar{\bsig}_1)$ is a
critical point for the total complementarity function (\ref{eq: tot comp1}), then   $\bar{x}$
 is a KKT point of the primal problem (\ref{eq: original problem}),
  $(\bar{\bsig}_0,\bar{\bsig}_1)$ is a KKT point of the dual problem (\ref{eq: dual}) and
$$
P(\bar{x})=\Xi_1(\bar{x},\bar{\bsig}_0,\bar{\bsig}_1)=P^d(\bar{\bsig}_0,\bar{\bsig}_1)
$$
\end{theorem}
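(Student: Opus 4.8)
The plan is to derive the full Karush--Kuhn--Tucker system by setting to zero the partial derivatives of $\Xi_1$ in (\ref{eq: tot comp1}) with respect to each block of variables, and then to read the zero-gap identity directly off the resulting stationarity relations.

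First I would differentiate $\Xi_1$ with respect to the second-level dual variables $\bsig_1=(\sigma_f,\sigma_g,\sigma_h)$. Since $\sigma_f$ enters only through $\Lambda_f(x)\sigma_f-V_f^*(\sigma_f)$, stationarity gives $\Lambda_f(\bar x)=\nabla V_f^*(\bar\sigma_f)$, and likewise $\Lambda_{g_i}(\bar x)=\nabla V_{g_i}^*(\bar\sigma_{g_i})$ and $\Lambda_{h_j}(\bar x)=\nabla V_{h_j}^*(\bar\sigma_{h_j})$ wherever the accompanying multiplier is nonzero. By Assumption \ref{bounded below} these are exactly the invertible canonical maps (\ref{eq: dual map}), so the Fenchel--Young equalities $V_f(\xi_f)+V_f^*(\sigma_f)=\xi_f\sigma_f$ and their analogues for $g$ and $h$ hold at $\bar x$. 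This is the step that collapses the second level back onto the first: substituting these equalities recovers $V_f(\Lambda_f(\bar x))=f(\bar x)+U(\bar x)$, $V_{g_i}(\Lambda_{g_i}(\bar x))=g_i(\bar x)$ and $V_{h_j}(\Lambda_{h_j}(\bar x))=h_j(\bar x)$, so that $\Xi_1(\bar x,\bar\bsig_0,\bar\bsig_1)$ reduces to the Lagrangian form $\Xi_0(\bar x,\bar\bsig_0)=f(\bar x)+\bar\lambda^Tg(\bar x)+\bar\mu^Th(\bar x)$ of (\ref{eq: lagrangian}).

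Next I would treat the remaining variables. Differentiating in $x$ and applying the chain rule through $\Lambda_f,\Lambda_g,\Lambda_h$ together with the constitutive relations above turns the $x$-stationarity of $\Xi_1$ into $\nabla f(\bar x)+\sum_i\bar\lambda_i\nabla g_i(\bar x)+\sum_j\bar\mu_j\nabla h_j(\bar x)=0$, i.e. stationarity of the classical Lagrangian ${\cal L}$. For the first-level dual variables $\bsig_0=(\lambda,\mu)$, the point to watch is that $\lambda$ is confined to $\lambda\ge0$ in ${\cal S}_0$, so criticality there must be read as the constrained (variational-inequality) condition rather than a plain vanishing gradient; using $\partial\Xi_1/\partial\lambda_i=g_i(\bar x)$ and $\partial\Xi_1/\partial\mu_j=h_j(\bar x)$ from the Fenchel--Young collapse, this yields primal feasibility $g(\bar x)\le0$, $h(\bar x)=0$, dual feasibility $\bar\lambda\ge0$, and complementary slackness $\bar\lambda^Tg(\bar x)=0$. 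Collecting these with the Lagrangian stationarity shows $\bar x$ is a KKT point of (\ref{eq: original problem}).

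Finally I would establish the two equalities. Since $\bar x$ is feasible, $V_0(\Lambda_0(\bar x))=0$ and $P(\bar x)=f(\bar x)$; combining $\bar\lambda^Tg(\bar x)=0$ with $\bar\mu^Th(\bar x)=0$ in the collapsed $\Xi_1$ gives $\Xi_1(\bar x,\bar\bsig_0,\bar\bsig_1)=f(\bar x)=P(\bar x)$. For the dual value, the $x$-stationarity means $\bar x$ realizes the stationary value defining $U^\Lambda$ in (\ref{eq: u-stat}); substituting $U^\Lambda(\bar\bsig_0,\bar\bsig_1)$ into (\ref{eq: dual}) and regrouping the $V^*$ terms reproduces (\ref{eq: tot comp1}) at $\bar x$, so $P^d(\bar\bsig_0,\bar\bsig_1)=\Xi_1(\bar x,\bar\bsig_0,\bar\bsig_1)$; the KKT property of $(\bar\bsig_0,\bar\bsig_1)$ for the dual then follows by an envelope argument, since $\partial\Xi_1/\partial x=0$ makes the dual-variable derivatives of $P^d$ agree with those of $\Xi_1$, which already vanish. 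The step I expect to be the main obstacle is the $\lambda$-direction: producing genuine complementary slackness that permits inactive constraints with $\bar\lambda_i=0$, rather than collapsing every inequality to an equality, requires handling $\lambda\ge0$ as a true constrained variable and checking that the constitutive relation for $\sigma_{g_i}$ remains available when $\bar\lambda_i=0$.
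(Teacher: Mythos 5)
Your proposal is correct and follows essentially the same route as the paper's proof: block-wise stationarity of $\Xi_1$, inversion of the Legendre relations to recover the constitutive equations $\bar\bsig_1=\nabla V(\Lambda(\bar x))$, the chain rule to turn $x$-stationarity into $\nabla{\cal L}(\bar x,\bar\lambda,\bar\mu)=0$, and the observation that $x$-stationarity realizes the stationary value defining $U^\Lambda$ in (\ref{eq: u-stat}), which yields $P^d(\bar\bsig_0,\bar\bsig_1)=\Xi_1(\bar x,\bar\bsig_0,\bar\bsig_1)$.

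The one place where you genuinely diverge from the paper is the treatment of the first-level dual variables $\bsig_0=(\lambda,\mu)$, and there you are in fact more careful than the printed proof. The paper's displayed first-order conditions (\ref{eq: foc tot com}) contain no derivatives with respect to $\lambda$ or $\mu$ at all; primal feasibility and complementary slackness are simply imported by citing the first-level canonical duality relations (\ref{eq: conditions on g}), i.e.\ they are taken as part of what "critical point" means for the nonsmooth first level. You instead derive $g(\bar x)\le 0$, $\bar\lambda\ge 0$, $\bar\lambda^Tg(\bar x)=0$ and $h(\bar x)=0$ from constrained (variational-inequality) criticality of $\Xi_1$ in $(\lambda,\mu)$ over ${\cal S}_0$, which is the rigorous reading of the same condition and makes the theorem self-contained. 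You also flag a subtlety the paper silently suppresses: the derivative of $\Xi_1$ in $\sigma_{g_i}$ carries the factor $\lambda_i$, so when $\bar\lambda_i=0$ the constitutive relation for $\sigma_{g_i}$ is not forced (the paper's third condition in (\ref{eq: foc tot com}) drops this prefactor, which is strictly incorrect). Note that this obstacle is benign rather than fatal: when $\bar\lambda_i=0$ the entire term $\lambda_i\left(\Lambda_{g_i}(x)\sigma_{g_i}-V^*_{g_i}(\sigma_{g_i})\right)$ contributes zero both to $\Xi_1$ and to its $x$-gradient, so the collapse to ${\cal L}$, the Lagrangian stationarity, and the zero-gap identities all survive with $\bar\lambda_ig_i(\bar x)=0$, exactly as complementary slackness requires.
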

\begin{proof}.
If $(\bar{x},\bar{\bsig}_0,\bar{\bsig}_1)$ is a critical point for the total complementarity function (\ref{eq: tot comp1}) then it must satisfy the following first order conditions:
\begin{eqnarray}\label{eq: foc tot com}
&\nabla_x\Xi_1(\bar{x},\bar{\bsig}_0,\bar{\bsig}_1)=&
\nabla\Lambda_f (\bar{x})\bas_f+\bal^T(\nabla\Lambda_g (\bax)\circ \bas_g)\nonumber\\
&&+\bam^T(\nabla\Lambda_h(\bax)\circ\bas_h)+Ax-c = 0 , \nonumber\\
&\nabla_{\sigma_f}\Xi_1(\bar{x},\bar{\bsig}_0,\bar{\bsig}_1)=& \Lambda_f(\bax)-\nabla V^*_f(\bas_f) = 0  ,\nonumber\\
&\nabla_{\sigma_g}\Xi_1(\bar{x},\bar{\bsig}_0,\bar{\bsig}_1)=& \Lambda_g(\bax)-\nabla V^*_g(\bas_h) = 0 , \nonumber\\
&\nabla_{\sigma_h}\Xi_1(\bar{x},\bar{\bsig}_0,\bar{\bsig}_1)=& \Lambda_h(\bax)-\nabla V^*_h(\bas_h) = 0 .
\end{eqnarray}
The last three conditions in the (\ref{eq: foc tot com}) are equivalent to
$$
\bas_f=\nabla V_f\left(\Lambda_f (\bax)\right), \quad \bas_g=
 \nabla V_g\left(\Lambda_g(\bax)\right), \quad \bas_h= \nabla V_h \left(\Lambda_f(\bax)\right).
$$
By substituting these conditions in the first equation of the (\ref{eq: foc tot com})
and using the chain rule of derivation on $f$, $g_i$ and $h_j$ for every $i=1,\dots, m$ and $j=1,\dots, p$,
we obtain
$$
\nabla f(\bax)+ \bal^T \nabla g(\bax) +\bam^T \nabla h(\bax)=\nabla {\cal L} (\bax,\bal,\bam)=0.
$$
This condition plus the conditions coming from the (\ref{eq: conditions on g}) prove that
 $\bax$ is a KKT point for the (\ref{eq: original problem}).
  Furthermore, from these complementarity conditions we obtain that
  $f(\bar{x})=\Xi_1(\bar{x},\bar{\bsig}_0,\bar{\bsig}_1)$.

The first equation of the (\ref{eq: foc tot com}) leads to the satisfaction of the stationarity condition
 (\ref{eq: u-stat}) that is:
\[
U^\Lambda(\bsig_0, \bsig_1)= \Lambda_f(x) \sigma_f +\lambda^T\left( \Lambda_g(x)\circ \sigma_g \right)+
\mu^T\left( \Lambda_h(x) \circ \sigma_h \right)-U(x).
\]
This together with the property that the first order conditions of the dual are equivalent to the
 last three conditions of the (\ref{eq: foc tot com}) proves that $(\bar{\bsig}_0,\bar{\bsig}_1) = (\bar{\lambda},\bar{\mu},\bar{\sigma})$ is
  a KKT point of the dual and
$\Xi_1(\bar{x},\bar{\bsig}_0,\bar{\bsig}_1)=P^d(\bar{\bsig}_0,\bar{\bsig}_1)$.
\qed
\end{proof}
This theorem shows  that with the canonical duality theory and the  sequential canonical dual transformation it is
possible to close the duality gap between the nonconvex  primal problem and its canonical  dual problem.

\section{Global Optimality Solutions}\label{lag num ex}

In order to have conditions for the global minimum of the original constrained problem
(\ref{eq: original problem}), we make the following assumptions
\begin{assumption}\label{convexity}
The canonical functions $V_f(\xi_f)$, $V_{g_i}(\xi_{g_i})$, and $V_{h_j}(\xi_{h_j})$
 are convex    for all $i=1,\dots,m$ and $j=1,\dots,p$.
 Furthermore,  for any Lagrange multiplier $\mu\in\real^p$, we assume that
\[
\mu^Th(x)>-\infty \quad \forall x\in \real^n.
\]
\end{assumption}

 Since $\Xi_1$ is a quadratic function of $x$, its  Hessian matrix is $x$-free and can be defined by
  $G( {\bsig}_0, {\bsig}_1)=\nabla_x^2\Xi_1( {\bsig}_0, {\bsig}_1)$. Let
\begin{equation}
{\cal S}_a^+=\{( {\bsig}_0, {\bsig}_1)\in {\cal S}_a | \;\;
 G( {\bsig}_0, {\bsig}_1)\succ 0, \;\; \mu_i > 0 \;\; \forall i=1, \dots, p\} .
\end{equation}
\begin{theorem}\label{global condition}
(Global Optimality Conditions) Suppose that Assumptions \ref{bounded below} and
\ref{convexity} are satisfied, and ${\cal S}_a^+ $ is convex. Then if the point $(\bar{x},\bar{\bsig}_0, \bar{\bsig}_1)$
is a critical point of the $\Xi_1$ and $(\bar{\bsig}_0, \bar{\bsig}_1)\in {\cal S}_a^+$,
then $(\bar{\bsig}_0, \bar{\bsig}_1)$ is the global maximizer of $P^d$ on
${\cal S}_a^+$ and $\bax$ is the global minimizer of $P$ on ${\cal X}_a$, that is
\[
P(\bax)=\min_{x\in{\cal X}_a} P(x)=\max_{( {\bsig}_0, {\bsig}_1)\in {\cal S}_a^+}
P^d({\bsig}_0, {\bsig}_1)=P^d(\bar{\bsig}_0, \bar{\bsig}_1 )
\]
\end{theorem}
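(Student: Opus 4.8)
The plan is to prove the statement by a weak-duality argument anchored at the critical point, rather than by trying to exhibit $\Xi_1$ as a jointly convex--concave saddle function. The latter route fails outright, because terms such as $\lambda_i(\Lambda_{g_i}(x)\sigma_{g_i}-V^*_{g_i}(\sigma_{g_i}))$ are bilinear in $(\lambda_i,\sigma_{g_i})$, so $\Xi_1$ is \emph{not} concave in $(\bsig_0,\bsig_1)$. The starting point is Theorem~\ref{com dual prin}: at the critical point $(\bax,\bar{\bsig}_0,\bar{\bsig}_1)$ we already have $P(\bax)=\Xi_1(\bax,\bar{\bsig}_0,\bar{\bsig}_1)=P^d(\bar{\bsig}_0,\bar{\bsig}_1)$, and $\bax$ is a KKT point, hence $\bax\in{\cal X}_a$. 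It therefore suffices to establish the two-sided weak-duality estimate
\[
P^d(\bsig_0,\bsig_1)\le P(x)\qquad\forall\,(\bsig_0,\bsig_1)\in{\cal S}_a^+,\ \forall\,x\in{\cal X}_a,
\]
since combining it with the zero-gap identity at the critical point pins both extrema simultaneously.

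First I would record that on ${\cal S}_a^+$ the dual value is a genuine minimization in $x$. Because $\Xi_1$ in (\ref{eq: tot comp1}) is quadratic in $x$ with Hessian $G(\bsig_0,\bsig_1)=\nabla^2_x\Xi_1\succ0$ there, the map $\Xi_1(\cdot,\bsig_0,\bsig_1)$ is strictly convex and coercive, so its stationary value --- which by (\ref{eq: dual}) and (\ref{eq: u-stat}) is precisely $P^d(\bsig_0,\bsig_1)$ --- is the global minimum, i.e. $P^d(\bsig_0,\bsig_1)=\min_{x\in\real^n}\Xi_1(x,\bsig_0,\bsig_1)$. In particular $P^d(\bsig_0,\bsig_1)\le\Xi_1(x,\bsig_0,\bsig_1)$ for every $x$, including feasible ones.

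Next I would bound $\Xi_1$ above by the classical Lagrangian. Maximizing $\Xi_1$ over the second-level variable $\bsig_1=(\sigma_f,\sigma_g,\sigma_h)$ and using Assumption~\ref{convexity} (so that $V_f,V_{g_i},V_{h_j}$ are convex and equal their biconjugates) recovers the canonical functions: $\sup_{\sigma_f}\{\Lambda_f(x)\sigma_f-V^*_f(\sigma_f)\}=V_f(\Lambda_f(x))$, and since $\lambda\ge0$ and $\mu>0$ on ${\cal S}_a^+$ the componentwise weighted suprema produce $\lambda^Tg(x)$ and $\mu^Th(x)$ respectively. Hence $\Xi_1(x,\bsig_0,\bsig_1)\le f(x)+\lambda^Tg(x)+\mu^Th(x)={\cal L}(x,\lambda,\mu)$. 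For $x\in{\cal X}_a$ we have $g(x)\le0$ with $\lambda\ge0$, so $\lambda^Tg(x)\le0$, and $h(x)=0$, so $\mu^Th(x)=0$; therefore ${\cal L}(x,\lambda,\mu)\le f(x)=P(x)$. Chaining the three estimates delivers the weak-duality inequality above.

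Finally I would close the argument both ways. Taking the infimum over $x\in{\cal X}_a$ gives $P^d(\bsig_0,\bsig_1)\le\min_{x\in{\cal X}_a}P(x)\le P(\bax)=P^d(\bar{\bsig}_0,\bar{\bsig}_1)$ for all $(\bsig_0,\bsig_1)\in{\cal S}_a^+$, so $(\bar{\bsig}_0,\bar{\bsig}_1)$ is the global maximizer of $P^d$; conversely, setting $(\bsig_0,\bsig_1)=(\bar{\bsig}_0,\bar{\bsig}_1)$ in the same inequality gives $P(\bax)=P^d(\bar{\bsig}_0,\bar{\bsig}_1)\le P(x)$ for all $x\in{\cal X}_a$, so $\bax$ is the global minimizer of $P$, and all four quantities coincide. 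I expect the main obstacle to be the middle step: justifying rigorously that the supremum over $\bsig_1$ reproduces $f$, $g$ and $h$, which hinges on the biconjugate identity $V^{**}=V$ (hence on convexity and lower semicontinuity from Assumption~\ref{convexity}) and, for the equality constraints, on the strict sign condition $\mu>0$ encoded in ${\cal S}_a^+$ --- this is exactly what forces the restriction from $\mu\neq0$ to $\mu>0$, and it is also the point where an alternative route through concavity of $P^d$ would instead have to invoke the convexity of ${\cal S}_a^+$.
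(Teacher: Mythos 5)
Your proof is correct, but it takes a genuinely different route from the paper's. The paper argues via a minimax exchange: it first shows (as you do in your middle step) that maximizing $\Xi_1$ over $\bsig_1$ recovers the Lagrangian ${\cal L}(x,\lambda,\mu)$, and that maximizing further over $\bsig_0 \in {\cal S}_0$ recovers $P(x)$ on ${\cal X}_a$; it then asserts that on ${\cal S}_a^+$, because $\Xi_1$ is convex in $x$ and concave in $\bsig_1$, the $\min_x$ and $\max_{(\bsig_0,\bsig_1)\in{\cal S}_a^+}$ can be exchanged --- this is where the hypothesis that ${\cal S}_a^+$ is convex is invoked. Your argument replaces that saddle-point exchange with the explicit two-sided weak-duality chain $P^d(\bsig_0,\bsig_1)\le\Xi_1(x,\bsig_0,\bsig_1)\le{\cal L}(x,\lambda,\mu)\le P(x)$, anchored at the critical point through Theorem \ref{com dual prin}. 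This buys you two things. First, you only need the Fenchel--Young inequality in one direction, not attainment of suprema. Second, and more importantly, you sidestep exactly the delicate point your opening paragraph identifies: the bilinear coupling $\lambda_i\bigl(\Lambda_{g_i}(x)\sigma_{g_i}-V^*_{g_i}(\sigma_{g_i})\bigr)$ means $\Xi_1$ is not concave in $(\bsig_0,\bsig_1)$ jointly, so the paper's min--max exchange is not a routine application of a saddle-point theorem (the paper's own Remark concedes that $P^d$ need not be concave on ${\cal S}_a^+$). A further dividend of your route is that the convexity hypothesis on ${\cal S}_a^+$ is never used, so your argument proves the statement under weaker assumptions. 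The one ingredient both proofs share is the biconjugation step $V^{**}=V$ under Assumption \ref{convexity}, combined with the sign conditions $\lambda\ge 0$ and $\mu>0$ from ${\cal S}_a^+$; both arguments pivot on it, the paper as an equality, yours as an inequality.
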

\begin{proof}.
By Assumption \ref{convexity}, the functions $V_f(\xi_f)$, $V_g(\xi_g) $ and $V_h(\xi_h)$
are convex. This implies that their  Legendre conjugates are  also convex.
 Because of the  positivity of both $\lambda$ and $\mu$,
 the total complementarity function $\Xi_1$ is concave in the dual variables $\sigma_f$, $\sigma_g$ and $\sigma_h$.
 Also these variables are decoupled. This implies that the following relation
\[
\max_{\sigma_f}\max_{\sigma_g}\max_{\sigma_h} \Xi_1(x,\bsig_0,\bsig_1)=\max_{\bsig_1}\Xi_1(x,\bsig_0,\bsig_1),
\]
is always verified in ${\cal S}_1$. By the fact that $\Xi_0$ is linear in both $\lambda$ and $\mu$ we have
\[
\max_{(\lambda,\mu)\in {\cal S}_0}\max_{\bsig_1\in {\cal S}_1}\Xi_1(x,\lambda,\mu,\bsig_1)
=\max_{(\lambda,\mu)\in {\cal S}_0}  {\cal L}(x,\lambda,\mu)=
\begin{cases}
\begin{array}{ll}
P(x)&    \mbox{if } \quad x\in{\cal X}_a\\
\infty &  \mbox{otherwise}.
\end{array}
\end{cases}
\]
Furthermore if $(\bsig_0,\bsig_1) \in {\cal S}_a^+$, then the total complementarity function is convex in $x$
 and concave in $\bsig_1$. For this reason the $\min$ and $\max$ statements can be exchanged in the
 total complementarity function and we obtain
\begin{eqnarray}
 \min_{x\in{\cal X}_a} P(x)&= \displaystyle\min_{x\in\real^n}\max_{(\bsig_0, \bsig_1)\in {\cal S}_a^+}
 \Xi_1(x,\bsig_0, \bsig_1) &\nonumber\\
&=\displaystyle\max_{( \bsig_0, \bsig_1 )\in {\cal S}_a^+}\min_{x\in\real^n}
\Xi_1(x,\bsig_0, \bsig_1  )=& \max_{(\bsig_0, \bsig_1  )\in {\cal S}_a^+} P^d(\bsig_0, \bsig_1 ).
\end{eqnarray}
This proves the theorem. \qed
\end{proof}

\begin{remark}
Since the geometrical operator  $\Lambda_1(\bx)$ is a quadratic vector-valued function of $\bx$,
 by  Assumption 1,
the canonical dual function $P^d(\bsig_0, \bsig_1)$ can be written in the following standard form:
\begin{equation}
P^d(\bsig_0, \bsig_1) =
- \frac{1}{2} \bF(\bsig_0, \bsig_1) \GG^{-1}(\bsig_0, \bsig_1) \bF(\bsig_0, \bsig_1) -  V^*(\bsig_0, \bsig_1) ,
\end{equation}
 where $V^*(\bsig_0, \bsig_1) = \left(V_f^*(\sigma_f)+\lambda^TV_g^*(\sigma_g)+\mu^TV_h^*(\sigma_h)\right)$,
 and  $\bF(\bsig_0, \bsig_1) \in \real^n$ depends on the linear terms in $\Lambda_1(\bx)$ and in $U(\bx)$ (see, for example the  Eqn (81) in \cite{GSR2009}).
By the fact that the canonical dual variables $\bsig_0$ and $\bsig_1$ are generally not independent (see Eqn (4.26) in \cite{GaoBook 2000}),
  even if $P^d(\bsig_0, \bsig_1)$ is concave in $\bsig_0$ and $\bsig_1$ respectively,
it may not be concave in $(\bsig_0, \bsig_1)$ on ${\cal S}_a^+$.
Detailed studies  on the convexity of $P^d(\bsig_0, \bsig_1)$
for polynomial optimization and neural network problems have been discussed in
\cite{gaot,LaG 13}

 \end{remark}

\begin{remark}\label{inverse global solution}
Similarly to Theorem \ref{global condition}, it is possible to  find global maximum conditions by defining
$$
{\cal S}_a^-=\{( {\bsig}_0, {\bsig}_1)\in {\cal S}_a | \;\;
 G( {\bsig}_0, {\bsig}_1)\prec 0, \;\; \mu_i < 0 \;\; \forall i=1, \dots, p\} .$$
Thus, if   $(\bar{x},\bar{\bsig}_0,\bar{\bsig}_1)$ is a critical point of the function $\Xi_1$ and such that
$(\bar{\bsig}_0,\bar{\bsig}_1)$ is the global minimizer of $P^d$ in ${\cal S}_a^-$,
then $\bax$ is the biggest local  maximizer of $P$ on ${\cal X}_a$.

In particular, if the problem is only composed of a quadratic objective function and equality constraints, it is possible to put together these conditions in order to find both the global minimum and global maximum.
\end{remark}

{\bf Example 1}.  Let us consider the following one-dimensional constrained problem
\begin{equation}\label{eq: primal one dim}
\min \left\{ \frac{1}{2}q x^2-cx \;\; | \;\;
s.t. \quad \frac{1}{2}\left(\frac{1}{2} x^2-d \right)^2-e=0 \right\}.
\end{equation}
Since the constraint $h(x)$ is a fourth-order polynomial (double well function),
we let $\Lambda_h(x) = \frac{1}{2} x^2$, the canonical dual  function can be obtained as
\[
P^d(\mu,\sigma)=-\frac{c^2}{2\left(q+\mu\sigma\right)}-\mu\left(\frac{1}{2}\sigma^2+\sigma d+e \right).
\]
In this particular example with  only one equality constraint, we have $\bsig_0 = \mu, \;\; \lambda = 0$ and
$\bsig_1 = \sigma $.
If we let   $q=1$, $c=1$, $d=6$, $e=15$, there are total four
 KKT points as reported in Table \ref{critical point ex 1}.
\begin{table}[ht]
\begin{center}
\begin{tabular}{c| c c c c c c}
\hline
&$x$  &$\mu$& $\sigma$& $f(x)$ &$P^d(\mu,\sigma)$&$G(\mu,\sigma)$\\
\hline
$(x_1,\mu_1, \sigma_1)$&$1.023$& $0.004$&$-5.48$&$-0.5$&$-0.5$&$0.98$\\
$(x_2,\mu_2, \sigma_2)$&$-1.023$& $0.36$&$-5.48$&$1.55$&$1.55$&$-0.98$\\
$(x_3,\mu_3, \sigma_3)$&$4.791$ &$ -0.14$&$5.48$&$6.69$&$6.69$&$0.21$\\
$(x_4,\mu_4, \sigma_4)$&$-4.791$&$-0.22$&$5.48$&$16.27$&$16.27$&$-0.21$\\
\hline
\end{tabular}
\end{center}
\caption{Critical points of the primal and dual problems for example (\ref{eq: primal one dim}) with $q=1$, $c=1$, $d=6$, $e=15$.}
\label{critical point ex 1}
\end{table}
It is easy to see  that there is no duality gap between the solutions of the primal and the dual problems just as
reported in Theorem \ref{com dual prin}. From the values of the multipliers $\mu$
 at the optimum, we can say that the first two critical points are the solutions of the minimization problem,
  while the last two are the solutions for the maximization problem. If we check to which domain the solutions belong, we have that
   $(\mu_1, \sigma_1)$ is  the global maximum in ${\cal S}_a^+$ while $(\mu_4, \sigma_4)$ is the local  minimum in ${\cal S}_a^-$.
    This means that $x_1$ is the global minimum of the original constrained problem, while $x_4$ is the biggest local
     maximum of the original constrained problem.
    This example shows once again that thanks to canonical duality theory, not only we are able to close the gap created by dropping the convexity
    assumptions in the Lagrangian function, but we are also able to obtain the conditions for finding the global minimum.

\section{Augmented Lagrangian}
We want to compare the approach of the Lagrangian with the one of augmented Lagrangian by using canonical duality theory.
We will consider the problem  only with one non-convex equality constraint $h(\bx) = 0$ (i.e. $p = 1$):
\begin{equation}\label{eq: aug lag}
{\cal L}_\nu(x,\mu;\nu)=f(x)+\mu  h(x)+\frac{1}{2\nu}\|h(x)\|^2,
\end{equation}
Where $\nu$ is a penalty parameter. The principal framework of Augmented Lagrangian consists in solving a
sequence of sub-problems with both the penalty constant $\nu_k$ and the Lagrangian multiplier $\mu_k$ fixed.
 At each iteration, a local minimum in $x$ of the function (\ref{eq: aug lag}) with fixed $\mu_k$ is found.
 The penalty constant is generally updated by $\nu_{k+1}=\alpha\nu_k$ with $\alpha\in (0,1)$, while the multipliers are updated in the following way:
\begin{equation}\label{eq: lam update}
\mu_{k+1}=\mu_k+\frac{h(x)}{\nu}.
\end{equation}
Then a new sub-problem with updated parameters is generated and a new iteration begins.

We analyze both the general case in which $\mu$ is considered as variable and the sub-problem in which $\mu_k$ is fixed.
Differently
 from the augmented Lagrangian approach, with canonical duality theory it is possible to consider $\mu$ as a variable.

Similarly with the previous sections we make the assumption that every equality constraint can be written in the following way
$$
h (x)=V_{h}(\xi_{h })=V_{h }(\Lambda_{h } (x))  ,
$$
where $V_{h }$ is  convex canonical function and $\Lambda_h $ is  a quadratic operator. The augmented Lagrangian can be written as:
\begin{equation}\label{eq: aug lag2}
{\cal L}_\nu(x,\mu)=f(x)+\mu  V_{h} (\Lambda_h(x))+\frac{1}{2\nu}\|V_{h} (\Lambda_h(x))\|^2,
\end{equation}
This function is different than the Lagrangian, as the penalty term adds a further level of complexity,  but with a simple canonical transformation we  can go back to a form similar to the (\ref{eq: lagrangian}). We choose as non-linear operator $\xi_0=h(x)$ and by following the same procedure for canonical duality transformation in the previous sections we obtain:
\begin{equation}\label{eq: aug dual var}
V_0(\xi_0)= \frac{1}{2\nu}\xi_0^2,
\;\;
 \quad \tau=\nabla V_0(\xi_0)=\frac{\xi_0}{\nu}, \;\;
 \quad V^*_0(\tau)=\frac{\tau^2\nu}{2}.
\end{equation}
It is important to notice that the dual variable $\tau$ at the optimum has the value of the increment that should be applied to $\mu$
at every iteration as described in the (\ref{eq: lam update}). By using the Fenchel-Young equality we obtain:
\begin{equation}\label{eq: xi aug}
\Xi_0^\nu(x,\mu,\tau)=f(x)+(\mu+\tau)^TV_{h} (\Lambda_h (x))-V^*_0(\tau)
\end{equation}
This formula is similar in its structure to the (\ref{eq: lagrangian}). By looking at the (\ref{eq: xi aug}), it is clear that
 because of the assumptions made on the constrains $h(x)$, the quantity $(\mu+\tau)$ must be positive in order to ensure that $\Xi_0^\nu(x,\mu,\tau)$ is
 bounded below in $x$. Furthermore, the quantity $(\mu+\tau)$ must not be zero otherwise the constrain would be ignored.
 By using the same procedure showed in the previous section we obtain:
\begin{equation}\label{eq: tot com aug}
\Xi_1^\nu(x,\mu,\tau,\sigma)= \Lambda_f(x) \sigma_f - V^*_f(\sigma_f)+(\mu+\tau)^T( \Lambda_h(x)  \sigma_h - V^*_h(\sigma_h))-V^*_0(\tau)-U(x)
\end{equation}
and the dual formulation is:
\begin{equation}\label{eq: aug dual}
P^{d}_\nu(\mu,\tau, \sigma)= U^\Lambda( \mu, \sigma)-\left(V_f^*(\sigma_f)+(\mu+\tau)^TV^*_h(\sigma_h)+\frac{\tau^2\nu}{2}\right)
\end{equation}
\begin{remark}
The complementary-dual principle proved in Theorem
 \ref{com dual prin} for the Lagrangian function can be easily extended to the
 critical points of ${\cal L}_a^\nu(x,\mu)$ and $P^{d}_\nu(\mu,\tau, \sigma)$ as well.
\end{remark}

\begin{theorem}\label{correspondence}
If $(\bam,\bat, \bas)$ is a critical point for $P^{d}_\nu(\mu,\tau, \sigma)$, then $\bat=0$.
Furthermore we have
$$
P^d_{\nu}(\bam,0,\bas)=P^d(\bam,\bas),
$$
that is $P^d_{\nu}$ and $P^d$ are equivalent in their stationary points and Theorem \ref{global condition} can be applied to find the global minimum.
\end{theorem}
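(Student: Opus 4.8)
The plan is to route everything through the total complementarity function $\Xi_1^\nu$ of (\ref{eq: tot com aug}) rather than work with $P^d_\nu$ directly. By the complementary-dual principle of Theorem \ref{com dual prin}, extended to the augmented setting in the Remark immediately preceding this statement, a critical point $(\bam,\bat,\bas)$ of $P^d_\nu$ corresponds to a critical point $(\bax,\bam,\bat,\bas)$ of $\Xi_1^\nu$ for some $\bax\in\real^n$. The whole theorem then follows by reading off the first-order conditions of $\Xi_1^\nu$, so the work reduces to a handful of partial derivatives.

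First I would use the $\sigma_h$-stationarity. Since the paragraph introducing (\ref{eq: xi aug}) has already argued that $(\mu+\tau)$ cannot vanish, the condition $\nabla_{\sigma_h}\Xi_1^\nu=(\bam+\bat)\bigl(\Lambda_h(\bax)-\nabla V_h^*(\bas_h)\bigr)=0$ forces $\bas_h=\nabla V_h(\Lambda_h(\bax))$, and the Fenchel-Young equality for the canonical pair $(V_h,V_h^*)$ then gives the identity $\Lambda_h(\bax)\bas_h-V_h^*(\bas_h)=V_h(\Lambda_h(\bax))=h(\bax)$. Next I would write down the two stationarity conditions that both involve this quantity. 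Using $V_0^*(\tau)=\tau^2\nu/2$ from (\ref{eq: aug dual var}), these are $\nabla_\mu\Xi_1^\nu=\Lambda_h(\bax)\bas_h-V_h^*(\bas_h)=0$ and $\nabla_\tau\Xi_1^\nu=\Lambda_h(\bax)\bas_h-V_h^*(\bas_h)-\nu\bat=0$. The first gives $h(\bax)=0$; substituting into the second gives $\nu\bat=h(\bax)=0$, and since the penalty parameter satisfies $\nu>0$ I conclude $\bat=0$, the first assertion.

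For the equality of values I would simply set $\bat=0$ in $P^d_\nu$ of (\ref{eq: aug dual}). The penalty term $\tau^2\nu/2$ then vanishes and the coupling $(\mu+\tau)$ collapses to $\mu$, both in the explicit factor multiplying $V_h^*(\sigma_h)$ and inside the stationary value $U^\Lambda$, whose defining $x$-stationarity (\ref{eq: u-stat}) sees the dual variables only through the sum $\mu+\tau$. Term by term $P^d_\nu(\bam,0,\bas)$ therefore matches the Lagrangian canonical dual $P^d(\bam,\bas)$ of (\ref{eq: dual}) specialized to $\lambda=0$, which is the claimed identity. Since critical points and critical values coincide, Theorem \ref{global condition} transfers to $P^d_\nu$ unchanged.

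The calculation is short, and the $\Xi_1^\nu$ route needs no envelope argument because $x$ is an independent stationarity variable there. The only real subtlety, and the place I would be most careful, is that the $\mu$- and $\tau$-derivatives genuinely share the same $V_h^*$ and $U^\Lambda$ contributions so that their difference isolates $\nu\bat$; equivalently, differentiating $P^d_\nu$ directly and subtracting $\nabla_\tau P^d_\nu$ from $\nabla_\mu P^d_\nu$ cancels these shared terms and yields $2\nu\bat=0$ in one stroke, using that $U^\Lambda$ depends on $\mu$ and $\tau$ only through $\mu+\tau$. Beyond that, the two facts that must be flagged are the nonvanishing of $(\mu+\tau)$, needed before dividing in the $\sigma_h$ step, and $\nu>0$, which is what excludes the spurious branch $\bat\neq 0$.
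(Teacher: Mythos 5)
Your proof is correct and follows essentially the same route as the paper's: the paper obtains $\bat=h(\bax)/\nu$ from the duality relation (\ref{eq: aug dual var}) (which is exactly your $\tau$-stationarity of $\Xi_1^\nu$), gets $h(\bax)=0$ from the feasibility of the associated KKT point $\bax$ (which is exactly the content of your $\mu$-stationarity plus the Fenchel--Young identification), and then plugs $\tau=0$ into (\ref{eq: aug dual}) to recover (\ref{eq: dual}); your version simply unpacks these citations into explicit stationarity computations. One trivial slip in your closing remark: since $P^{d}_\nu$ depends on $(\mu,\tau)$ only through $\mu+\tau$ apart from the penalty term, $\nabla_\mu P^{d}_\nu-\nabla_\tau P^{d}_\nu=\nu\bat$, not $2\nu\bat$, which changes nothing in the conclusion.
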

\begin{proof}
From the second conditions in the (\ref{eq: aug dual var}) we have that in critical points $\bat=\frac{h(\bax)}{\nu}$. As $(\bax,\bam)$ is
a feasible KKT point with associated multipliers $\bam$, we have that $\frac{h(\bax)}{\nu}=0$.
 If $\bat$ is zero for every critical point, then by plugging this value in every $\tau$ of the (\ref{eq: aug dual}) we obtain the (\ref{eq: dual}).
\qed
\end{proof}
\begin{remark}
Theorem  \ref{correspondence} shows that,
from canonical duality point of view,
the use of the penalty term is not necessary in the problems considered in this paper because
 it increases both the complexity of the primal problem and
the dimensionality of the dual problem. By solving the dual problem in both the Lagrange multiplier $\mu$ and dual variable $\sigma$
it is possible to find the global solution of the original problem.
\end{remark}

\subsection{Solution to the Sub-Problem}
Like we have stated in the previous section, the strategy of the augmented Lagrangian creates a succession of
sub-problems with solutions are convergent to a stationary point of ${\cal L}(x,\mu)$.
In these sub-problems both $\mu$ and $\nu$ are fixed to certain values and then updated once the sub-problem
is solved and before a new iteration starts. In this section we want to apply canonical duality theory to the subproblem.
 The primal problem is
$$
{\cal L}_{\nu,\mu_k}(x)=f(x)+\mu_k  h(x)+\frac{1}{2\nu}\|h(x)\|^2,
$$
with associated dual similar to the (\ref{eq: aug dual}), that is
$$
P^{d}_{\nu,\mu_k}(\tau, \sigma)= U^\Lambda( \mu_k, \sigma)-\left(V_f^*(\sigma_f)+(\mu_k+\tau)^TV^*_h(\sigma_h)+\frac{\tau^2\nu}{2}\right)
$$
We also define the following matrix:
$$
G(\tau,\sigma)= \nabla^2_\bx  \Xi_1^{\nu,\mu_k}( \bx, \tau,\sigma),
$$
where $\Xi_1^{\nu,\mu_k}(x,\tau,\sigma)$ is the total complementarity function that connects the primal and
 dual problem that can be easily obtained by the (\ref{eq: tot com aug}). Let
\begin{equation}
{\cal S}_{a,\mu_k}^+=\{(\tau, \sigma)\in {\cal S}_a |  \;\;   G(\tau, \sigma) \succ  0 \}.
\end{equation}

In this case the solution of the sub-problem ${\cal L}_\nu(x,\mu_k)$ are not KKT points of the original problem
(\ref{eq: original problem}) and Theorem \ref{correspondence} cannot be applied due to the additional penalty term. By
the canonical duality we have the following Corollary.
\begin{corollary}\label{aug coro}
Suppose that the point $\bax$ is a stationary point of ${\cal L}_{\nu,\mu_k}(x)$, then $\bax$ has a corresponding $(\bat,\bas)$ that is a stationary point of the $P^{d}_{\nu,\mu_k}$ and
$$
{\cal L}_{\nu,\mu_k}(\bax)=P^{d}_{\nu,\mu_k}(\bat, \bas).
$$
Furthermore if $\mu_k+\tau>0$ and $(\bat,\bas)\in {\cal S}_a^+$ then $\bax$ is the global minimizer of ${\cal L}_\nu(x,\mu_k)$.
\end{corollary}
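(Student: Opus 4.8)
The plan is to treat this Corollary as the sub-problem analogue of Theorem \ref{com dual prin} and Theorem \ref{global condition}, with the bridging object being the sub-problem total complementarity function $\Xi_1^{\nu,\mu_k}(x,\tau,\sigma)$ obtained from (\ref{eq: tot com aug}) by fixing $\mu=\mu_k$. The first claim (correspondence of stationary points and equality of values) should be proved exactly as in Theorem \ref{com dual prin}, while the second (global minimality) should mirror the proof of Theorem \ref{global condition}; the only genuinely new ingredients are the frozen multiplier $\mu_k$ and the extra penalty variable $\tau$ carrying the concave term $-\frac{\tau^2\nu}{2}$.

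For the first part I would start from the primal stationarity $\nabla_x {\cal L}_{\nu,\mu_k}(\bax)=0$, that is $\nabla f(\bax)+\left(\mu_k+\frac{1}{\nu}h(\bax)\right)\nabla h(\bax)=0$. I then set $\bat=h(\bax)/\nu$, $\bas_f=\nabla V_f(\Lambda_f(\bax))$ and $\bas_h=\nabla V_h(\Lambda_h(\bax))$, and verify that $(\bax,\bat,\bas)$ is a critical point of $\Xi_1^{\nu,\mu_k}$: the $\sigma$-conditions reproduce the invertible canonical relations (\ref{eq: dual map}), the $\tau$-condition reproduces the penalty relation of (\ref{eq: aug dual var}) through the Fenchel--Young identity $\Lambda_h(\bax)\bas_h-V_h^*(\bas_h)=V_h(\Lambda_h(\bax))=h(\bax)$, and the $x$-condition, after the chain rule, collapses back to the primal stationarity above. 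Since the first-order conditions of $P^{d}_{\nu,\mu_k}$ are equivalent to the $\tau$- and $\sigma$-conditions of $\Xi_1^{\nu,\mu_k}$ (this is the content of the Remark preceding Theorem \ref{correspondence}), $(\bat,\bas)$ is stationary for $P^{d}_{\nu,\mu_k}$, and substituting the three Fenchel--Young equalities into $\Xi_1^{\nu,\mu_k}$ gives ${\cal L}_{\nu,\mu_k}(\bax)=\Xi_1^{\nu,\mu_k}(\bax,\bat,\bas)=P^{d}_{\nu,\mu_k}(\bat,\bas)$.

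For the second part I assume $\mu_k+\bat>0$ and $(\bat,\bas)\in {\cal S}_{a,\mu_k}^+$, so that $G(\bat,\bas)\succ0$. The key computation is the point-wise recovery of the primal by dual maximization: for fixed $x$, maximizing $\Xi_1^{\nu,\mu_k}$ over $\sigma_f$ returns $V_f(\Lambda_f(x))$, maximizing over $\sigma_h$ (legitimate because $\mu_k+\tau>0$) returns $(\mu_k+\tau)h(x)$, and the remaining concave parabola $(\mu_k+\tau)h(x)-\frac{\tau^2\nu}{2}$ is maximized at $\tau=h(x)/\nu$, yielding precisely $\mu_k h(x)+\frac{1}{2\nu}\|h(x)\|^2$; combined with the identity $V_f(\Lambda_f(x))-U(x)=f(x)$ from Assumption \ref{bounded below}, this shows $\max_{(\tau,\sigma)}\Xi_1^{\nu,\mu_k}(x,\tau,\sigma)={\cal L}_{\nu,\mu_k}(x)$. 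Because $G\succ0$ renders $\Xi_1^{\nu,\mu_k}$ convex in $x$ on the convex region ${\cal S}_{a,\mu_k}^+$ while it is concave in each dual variable, I would then exchange $\min_x$ and $\max_{(\tau,\sigma)}$ as in Theorem \ref{global condition} to obtain $\min_x {\cal L}_{\nu,\mu_k}(x)=\max_{(\tau,\sigma)\in {\cal S}_{a,\mu_k}^+}P^{d}_{\nu,\mu_k}(\tau,\sigma)=P^{d}_{\nu,\mu_k}(\bat,\bas)$, identifying $\bax$ as the global minimizer of ${\cal L}_\nu(x,\mu_k)$.

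The main obstacle I anticipate is justifying the $\min$--$\max$ exchange in the second part, since, as the Remark after Theorem \ref{global condition} observes, $P^{d}_{\nu,\mu_k}$ need not be jointly concave in $(\tau,\sigma)$ because of the bilinear coupling $(\mu_k+\tau)(\Lambda_h(x)\sigma_h-V_h^*(\sigma_h))$. The remedy is to follow Theorem \ref{global condition} literally: carry out the dual maximization variable-by-variable, where the decoupled and separately-concave structure is enough, and invoke the saddle property available on the convex set ${\cal S}_{a,\mu_k}^+$ on which convexity in $x$ is guaranteed by $G\succ0$. A secondary point requiring care is the strict inequality $\mu_k+\tau>0$, which is needed both to recover $h(x)$ with the correct sign in the $\sigma_h$-maximization and to keep $\Xi_1^{\nu,\mu_k}$ concave in $\sigma_h$; without it the penalty term would fail to close the gap.
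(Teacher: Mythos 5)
Your proposal is correct and follows exactly the route the paper intends: the paper omits this proof entirely, stating only that it ``is similar to those of Theorem \ref{com dual prin} and Theorem \ref{global condition} and can be omitted,'' and your argument is precisely that adaptation, carried out in detail via the sub-problem complementarity function $\Xi_1^{\nu,\mu_k}$ with the frozen multiplier $\mu_k$ and the extra penalty variable $\tau$. Your explicit verification of the value identity and the pointwise recovery $\max_{(\tau,\sigma)}\Xi_1^{\nu,\mu_k}(x,\tau,\sigma)={\cal L}_{\nu,\mu_k}(x)$ (including the role of $\mu_k+\tau>0$) supplies exactly the details the paper leaves to the reader.
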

\begin{proof}
This proof is similar to those of Theorem \ref{com dual prin} and Theorem \ref{global condition} and can be omitted.
\qed
\end{proof}
Because of this Corollary, it is possible to find the global solution $x^*$ to ${\cal L}_{\nu,\mu_k}(x)$ for any value of $\nu$ and $\mu_k$. Furthermore, as $\tau=\frac{h(x)}{\nu}$, it is possible to update the current value of the multiplier $\mu_{k+1}=\mu_k+\tau^*$, where $\tau^*$ is the dual variable corresponding to $x^*$, to get closer to the Lagrangian multiplier $\mu^*$ of the global solution.

\subsection{Sub-Problem Example}

In this subsection we study the same example already proposed in Section \ref{lag num ex} but with the augmented Lagrangian.
First we show how the penalty term, in the case of non-convex constraints,
 greatly increases the complexity of the problem.
 From Figure \ref{target} it is possible to see the target function and the constrain.
 The black dots in the picture highlight the four KKT points for this problem.
\begin{figure}[h]
\begin{center}
\includegraphics[scale=.3]{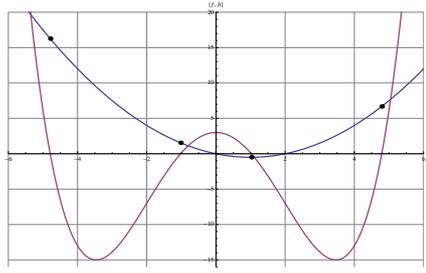}
\caption{Graphs of the target function (blue) and constrain (red) with KKT points highlighted}
\label{target}
\end{center}
\end{figure}
\begin{figure}
 \begin{minipage}{0.45\textwidth}
   \centering
   \includegraphics[scale=.2]{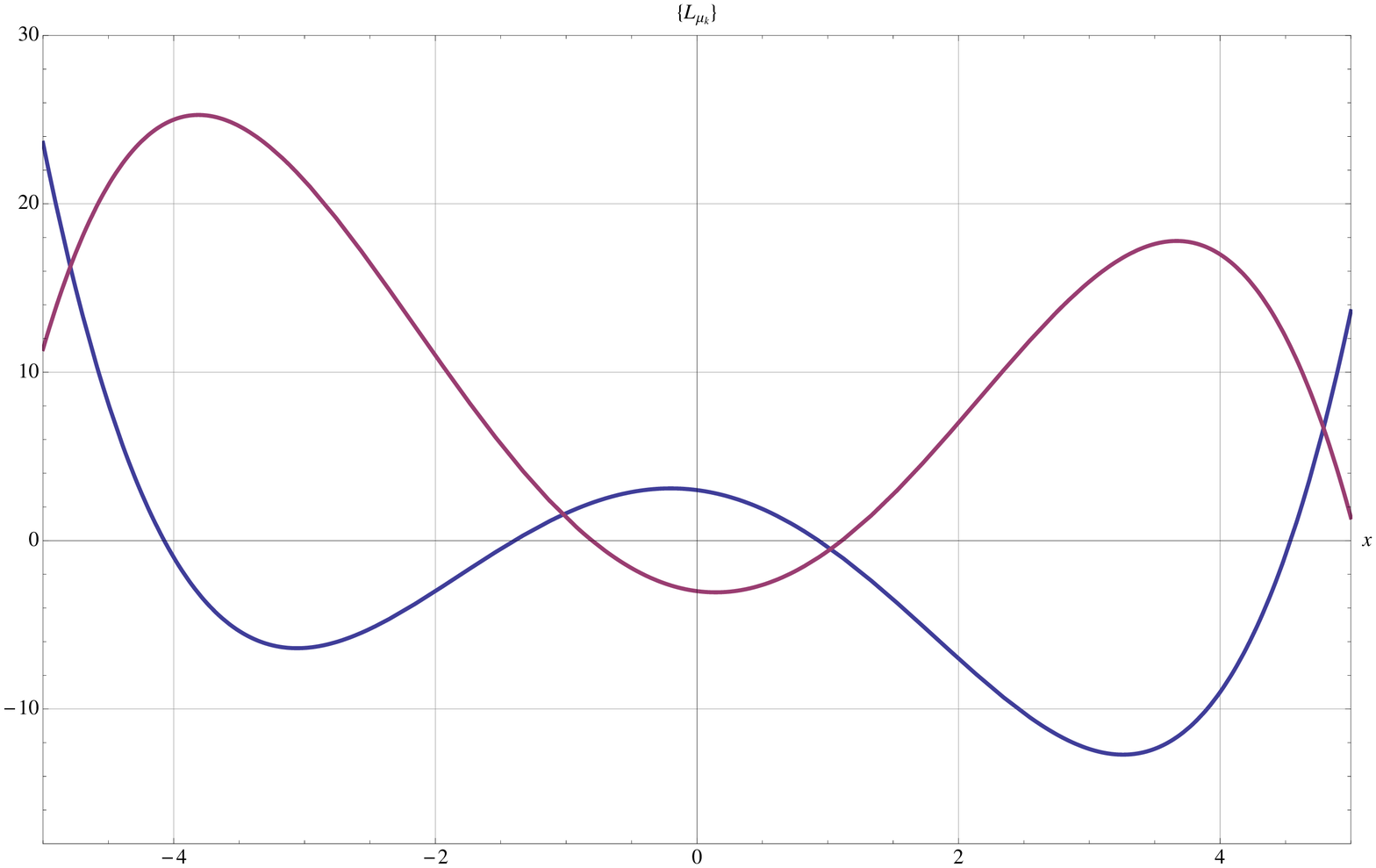}
   \caption{Comparison for the Lagrangian with positive  (blue) and negative multipliers( red)}
   \label{lagra}
 \end{minipage}
 \quad
 \begin{minipage}{0.45\textwidth}
  \centering
   \includegraphics[scale=.33]{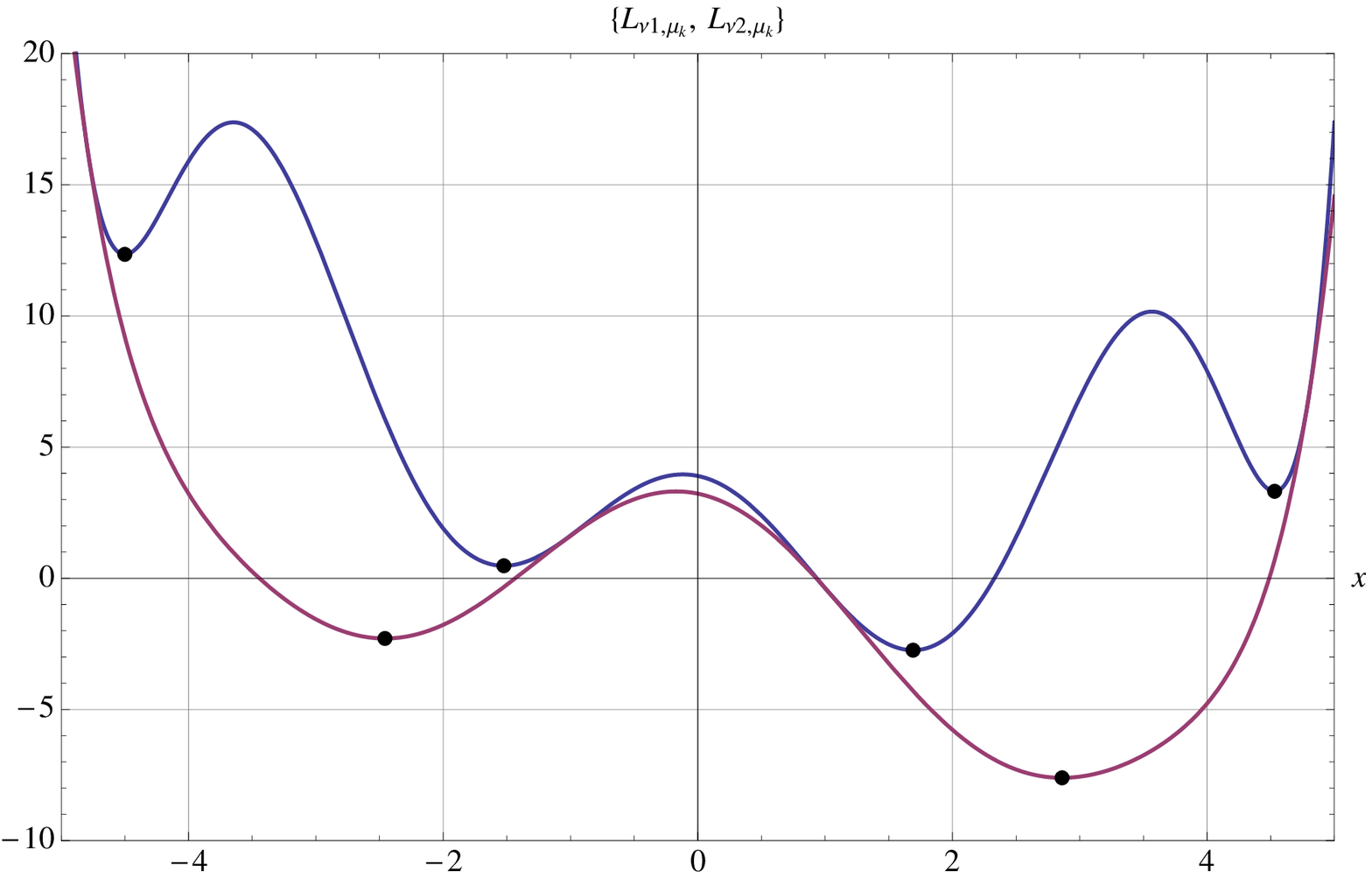}
   \caption{Augmented Lagrangian for  $\mu = 5 $ (blue)  and   $\mu = 20 $ (red).}
   \label{augment}
 \end{minipage}
\end{figure}
Figure \ref{lagra} shows the Lagrangian function for positive multiplier $\mu=1$ and negative multiplier $\mu=-1$.
In both cases we observe the presence of a double well. In the case of positive multiplier there are the two local minima,
while in the case of negative multipliers the two local maxima can be seen.

Finally in Figure \ref{augment} two augmented Lagrangian functions are shown. The blue function has a
relatively smaller  value of the penalty parameter, $\nu=5$,
 while the red function has a big  value of the penalty parameter, $\nu=20$.
  The small values   $\nu$ produce
  nonconvex augmented Lagrangian,
   and the points corresponding to local maxima of the original problem are made into local minima by the penalty term.
   This produces much more difficulties in numerical computation for finding the global optimal solution.

We have already showed in section \ref{lag num ex} that
the  canonical duality theory is able to find the global minimum of the Lagrangian function,
and at the beginning of this section we showed that the same solution is valid if the dual problem of the augmented Lagrangian is solved with
also considering $\mu$ as a variable. Now we show the results for the dual when $\mu_k$ is fixed,
by  Corollary 1  the global solution of the sub-problem can be found.

We solve the problem of the augmented Lagrangian with the same parameters of the problem in Section \ref{lag num ex}, with $\mu=1$ and $\nu=5$.
The function in blue of Figure \ref{augment} is the problem we want to solve. In this case the dual is:
$$
P^{d}_{\nu,\mu_k}(\tau, \sigma)=-\frac{c^2}{2\left(q+(\mu_k+\tau)\sigma\right)}-(\mu_k+\tau)\left(\frac{1}{2}\sigma^2+\sigma d+e \right)-\frac{\tau^2\nu}{2}.
$$
\begin{table}[ht]
\begin{center}
\begin{tabular}{c| c c c c c c c}
\hline
&$x$	&	$\tau$	&	$\sigma$	&	${\cal L}_{\nu,\mu_k}(x)$	&	$P^{d}_{\nu,\mu_k}(\tau, \sigma)$	&	$G(\tau, \sigma)$	&	$(\mu + \tau)$	\\
\hline
$(x_1,\tau_1,\sigma_1)$&1.69	&	-0.91	&	-4.57	&	-2.74	&	-2.74	&	0.59	&	0.09	\\
$(x_2,\tau_2,\sigma_2)$&-1.52	&	-0.66	&	-4.84	&	0.48	&	0.48	&	-0.66	&	0.34	\\
$(x_3,\tau_3,\sigma_3)$&4.53	&	-1.18	&	0.36	&	3.32	&	3.32	&	1.88	&	-0.18	\\
$(x_4,\tau_4,\sigma_4)$&-4.50	&	-1.30	&	4.13	&	12.35	&	12.35	&	-0.22	&	-0.30	\\
$(x_5,\tau_5,\sigma_5)$&-0.12	&	0.59	&	-5.99	&	3.72	&	3.72	&	-8.54	&	1.59	\\
$(x_6,\tau_6,\sigma_6)$&-3.65	&	-2.96	&	0.65	&	17.38	&	17.38	&	-0.27	&	-1.96	\\
$(x_7,\tau_7,\sigma_7)$&3.57	&	-2.99	&	0.36	&	10.16	&	10.16	&	0.28	&	-1.99	\\
\hline
\end{tabular}
\end{center}
\caption{Critical points of the augmented Lagrangian. The first four points correspond to the KKT points of the
original problem, while the last three to the to the maxima of the Lagrangian function}
\label{critical point ex 2}
\end{table}
 Table \ref{critical point ex 2} lists all  critical points of the primal problem and the dual problem.
 From these results we can see  that there is no duality gap between the primal solutions  and their
 canonical dual solutions. By the fact that the point $(x_1,\tau_1,\sigma_1)$  satisfies both
   the conditions: $G(\tau, \sigma) \succeq 0$ and $(\mu + \tau)>0$,
    it is the point corresponding to the global
    minimum of the primal problem, just as it is reported in Corollary \ref{aug coro}.
    Moreover by updating $\mu_{k+1}=\mu_k+\tau_1=0.09$ for the next iteration, the value of the
     multiplier gets closer to the one corresponding to the global minimum, as reported in Table \ref{critical point ex 1}.
     Furthermore, by the conditions in Remark \ref{inverse global solution} adapted for this sub-problem, the point $(x_4,\tau_4,\sigma_4)$ is
      the biggest local maximum of the original problem.

This example shows that even if the problem with non-convex constraints becomes more complicated due to the additional penalty term,
the canonical duality theory is still able to find the global solution.
It is also important to note that for a problem with nonlinear constraints,
 the augmented Lagrangian methods usually produce a  nonconvex sub-problem with double  local minimizers.
Traditional direct methods and algorithms for solving such highly nonconvex  problems have great difficulties to find a good solution.


\section{Conclusions}
In this paper we have shown that the canonical duality theory presents a unified framework to cover traditional
 Lagrangian duality and KKT theory.
 For general nonlinear constrained problems, the popular penalty methods and augmented Lagrangian theory
  may produce nonconvex sub-problems.
 Theorem \ref{correspondence} shows  that as long as the nonconvex constraints satisfy the conditions in Assumption 1 and 2,
the canonical duality theory can be used to solve the problem and the augmented Lagrangian method is indeed  not necessary.

Finally we showed that even with the unnecessary nonconvex term produced by the penalty method,
the  canonical duality theory is still able to find the the best solution of the problem.

\end{document}